\documentclass[11pt]{amsart}
\usepackage{inputenc}
\usepackage[hmarginratio={1:1},
  vmarginratio={1:1}, vmargin = 2.5cm, hmargin = 2cm]{geometry}

\linespread{1.5}
\setlength{\parindent}{0pt}

\usepackage{graphicx} 
\usepackage{subcaption}
\usepackage[export]{adjustbox}
\usepackage{amsmath}
\usepackage{amssymb}
\usepackage{mathrsfs}
\usepackage{xcolor}
\usepackage{algorithm}
\usepackage{algpseudocode}
\usepackage{svg}
\usepackage{tabularx}
\usepackage{multicol}
\usepackage{epstopdf}
\usepackage{url}

\newtheorem{theorem}{Theorem}[section]

\newtheorem{definition}[theorem]{Definition}
\newtheorem{notation}[theorem]{Notation}

\newtheorem{remark}{Remark}

\numberwithin{equation}{section}

\newcommand{\Z}{\mathbb Z}
\newcommand{\Q}{\mathbb Q}
\newcommand{\C}{\mathbb C}
\newcommand{\R}{\mathbb R}

\DeclareMathOperator*{\argmax}{arg\,max}
\DeclareMathOperator*{\argsort}{arg\,sort}

\author[P. Engel]{Peter Engel}
\author[O. Hammond-Lee]{Owen Hammond-Lee}
\author[Y. Su]{Yiheng Su}
\author[D.~Varga]{Dániel Varga}
\author[P.~Zsámboki]{Pál Zsámboki}\thanks{Corresponding author: Pál Zsámboki, e-mail address: \texttt{zsamboki.pal@renyi.hu}\\[3 pt]}

\title{Diverse beam search to find densest-known planar unit distance graphs}

\begin{document}

\maketitle

\begin{abstract}
    
This paper addresses the problem of determining the maximum number of edges in a unit distance graph (UDG) of $n$ vertices using computer search. An unsolved problem of Paul Erdős asks the maximum number of edges $u(n)$ a UDG of $n$ vertices can have. Those UDGs that attain $u(n)$ are called ``maximally dense.'' In this paper, we seek to demonstrate a computer algorithm to generate dense UDGs for vertex counts up to at least 100. Via beam search with an added visitation metric, our algorithm finds all known maximally dense UDGs up to isomorphism at the push of a button. In addition, for $15 < n$, where $u(n)$ is unknown, i) the algorithm finds all previously published densest UDGs up to isomorphism for $15 < n \le 30$, and ii) the rate of growth of $u(n)/n$ remains similar for $30 < n$. The code and database of over 60 million UDGs found by our algorithm can be found at \url{https://codeberg.org/zsamboki/dbs-udg}.
\end{abstract}

\section{Introduction}
A simple graph is a unit distance graph (UDG) if it has an embedding formed by placing distinct points in a Euclidean space $\mathbb R^k$ and connecting any two points with an edge if and only if the distance between them is exactly one. In this paper, we are only concerned with planar UDGs, that is those graphs that have such embeddings to $\mathbb R^2$. We will use the term UDG to refer to both the graph itself and any such embeddings. An unsolved problem of Paul Erdős \cite{erdos1946} asks the maximum number of edges a unit distance graph of $n$ vertices can have, this value denoted as $u(n)$. Those UDGs that attain the maximum number of edges are called ``maximally dense.'' The current best upper bound of $u(n)\le\sqrt[3]{\frac{29 n^4}{4}}$ was established in Ágoston, Pálvölgyi \cite{agoston2022improved}. As for lower bounds, Erdős himself showed that there exists $c>0$ and infinitely many $n$ such that $n^{1+c / \log \log n}\le u(n)$ \cite{erdos1990}.

The value of $u(n)$ is known for $n \leq 15$. For $n$ between 3 and 8, a maximally dense UDG can be constructed by pasting equilateral triangles together. For $9\le n\le 15$, constructions involve approaches such as Cartesian products (for example with the densest known graph at $n = 9$, which is the Hamming graph $H(2,3)$) or rotating equilateral triangles and squares. A prior attempt to find dense UDGs for low vertex counts which employs many of these techniques is found in Schade \cite{schade}.

In this paper, we seek to demonstrate an algorithm to generate dense UDGs. In section 2, we introduce the structural properties of the UDGs that our algorithm finds and the space in which they are embedded. Then in section 3, we introduce the diverse beam search \cite{Vijayakumar} approach that we employ. Section 4 covers the implementation of the algorithm and certain optimizations. Following, in section 5, we show and discuss our results, both a comparison of our findings versus the known bounds and certain UDGs of interest. Finally, in section 6, we discuss future thoughts: potential optimizations, different approaches, and applications of our results.

\section{The Search Space}
Our objects of interest are UDGs in the Euclidean plane. We can view the vertices of these graphs as complex numbers. In this section, we introduce the Moser lattice and construct UDGs on this lattice. Notably, highly dense unit distance graphs often fall on the Moser lattice, and each maximally dense graph presented in Ágoston, Pálvölgyi \cite{agoston2022improved} has been shown to be embeddable on this lattice.

\subsection{The Moser Lattice}
\begin{notation}
    For a positive integer $t\in\mathbb Z_{>0}$, we let
    $$
    \omega_t=\exp(i\cdot\arccos(1 - 1 / 2t)).
    $$
\end{notation}
\begin{definition}
The Moser lattice, denoted as $M_L$, is defined as the additive subgroup
\begin{align*}
    M_L = \{a\cdot 1 + b\cdot\omega_1 + c\cdot\omega_3 + d\cdot\omega_1\omega_3 \mid a, b, c, d \in \Z\} = \Z\langle1, \omega_1, \omega_3, \omega_1\omega_3\rangle\le(\C,+),
\end{align*}
\end{definition}
\begin{theorem}
The following properties hold for the Moser lattice $M_{L}$:
\begin{enumerate}
    \item The degree of $M_{L}$ is 4.
    
    \item $\{1, \omega_1, \omega_3, \omega_1\omega_3\}$ is a basis for $M_{L}$.

    \item $M_{L}$ is isomorphic to $\mathbb{Z} \times \mathbb{Z} \times \mathbb{Z} \times \mathbb{Z} = \mathbb{Z}^4$.
\end{enumerate}
\end{theorem}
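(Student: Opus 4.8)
The plan is to reduce all three parts to a single fact: the four elements $1,\omega_2,\omega_3,\omega_2\omega_3$ are linearly independent over $\Q$ and span the number field $K=\Q(\omega_2,\omega_3)$. Indeed, part (2) is exactly this statement; part (1) --- reading ``degree'' as the rank of the free $\Z$-module $M_L$, equivalently $\dim_\Q \Q M_L = [K:\Q]$ --- follows by counting the basis; and part (3) follows because a $\Z$-module with a $\Q$-linearly independent generating set of size $4$ is free of rank $4$.

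First I would identify the two quadratic subfields. A direct computation gives $2\omega_2 - 1 = i\sqrt 3$, so $\omega_2$ is a root of $x^2 - x + 1$ and $\Q(\omega_2) = \Q(\sqrt{-3})$ has degree $2$ over $\Q$. Similarly $6\omega_3 - 5 = i\sqrt{11}$, so $\omega_3$ is a root of $3x^2 - 5x + 3$, which is irreducible over $\Q$ (negative discriminant, no rational root), and $\Q(\omega_3) = \Q(\sqrt{-11})$ also has degree $2$.

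The crux is then the non-degeneracy step: $\Q(\sqrt{-3}) \neq \Q(\sqrt{-11})$, which I would verify by assuming $\sqrt{-11} = a + b\sqrt{-3}$ with $a,b\in\Q$, squaring, and deriving a contradiction from the resulting relation $ab = 0$. Hence $\omega_3 \notin \Q(\omega_2)$, so the minimal polynomial of $\omega_3$ over $\Q(\omega_2)$ still has degree $2$: it is $\le 2$ because $\omega_3$ already satisfies a quadratic over $\Q$, and $\ge 2$ because $\omega_3 \notin \Q(\omega_2)$. The tower law then gives $[K:\Q] = [K:\Q(\omega_2)]\cdot[\Q(\omega_2):\Q] = 2\cdot 2 = 4$, and the standard tower-of-bases argument shows that the products $\{1,\omega_2\}\cdot\{1,\omega_3\} = \{1, \omega_2, \omega_3, \omega_2\omega_3\}$ form a $\Q$-basis of $K$. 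Finally, $\Q$-linear independence implies $\Z$-linear independence, so the map $\Z^4 \to M_L$ sending $(a,b,c,d)$ to $a + b\omega_2 + c\omega_3 + d\omega_2\omega_3$ is a group isomorphism.

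All of this is routine; the only step that deserves attention is the non-degeneracy check that $\Q(\sqrt{-3})$ and $\Q(\sqrt{-11})$ are distinct --- without it the degree could a priori collapse to $2$ --- after which the argument is just bookkeeping with minimal polynomials and the multiplicativity of field degrees.
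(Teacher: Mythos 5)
Your proof is correct, and it follows the same overall strategy as the paper: compute $[\Q(\omega_2,\omega_3):\Q]=4$ from the quadratic minimal polynomials of $\omega_2$ and $\omega_3$, deduce that $\{1,\omega_2,\omega_3,\omega_2\omega_3\}$ is a basis, and read off the isomorphism $\Z^4\cong M_L$. The one place where you genuinely diverge is the justification of the degree count, and there your version is the more careful one. The paper argues that the two minimal polynomials are ``quadratic and coprime, which implies $[\Q[\omega_2,\omega_3]:\Q]=4$''; but any two distinct irreducible polynomials are coprime, and distinct quadratic minimal polynomials do not by themselves force the compositum to have degree $4$ (e.g.\ $\sqrt2$ and $1+\sqrt2$ have distinct quadratic minimal polynomials yet generate the same field). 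The real content is exactly your non-degeneracy check that $\Q(\sqrt{-3})\neq\Q(\sqrt{-11})$, so that $\omega_3\notin\Q(\omega_2)$ and the tower law gives $2\cdot2=4$. So you have not merely reproduced the paper's argument --- you have supplied the step it elides. The remainder (product basis from the tower, $\Q$-independence implying $\Z$-independence, surjectivity of $(a,b,c,d)\mapsto a+b\omega_2+c\omega_3+d\omega_2\omega_3$ by definition of $M_L$) matches the paper and is fine.
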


\begin{proof}
Firstly, consider the minimal polynomials of $\omega_1$ and $\omega_{3}$ over $\mathbb{Q}$. As we have $\omega_1^2 - \omega_1 + 1 = 0$ and $\omega_{3}^2 - \frac{5}{3} \omega_{3} + 1 = 0$, the two minimal polynomials are quadratic and coprime, which implies that $[\mathbb{Q}[\omega_1,\omega_{3}]:\mathbb{Q}] = 4$. That is, the degree of the extension is 4.

Secondly, observe that the 4-element set $\{1, \omega_1, \omega_3, \omega_1\omega_3\}$ generates the $\Q$-vector space $\Q[\omega_1,\omega_3]$. Thus, this set is a basis for $M_{L}$. 

Lastly, define the map $\varphi: \mathbb{Z}^4 \rightarrow M_{L}$, where $\varphi((a,b,c,d)) = a\cdot 1 + b\cdot\omega_1 + c\cdot\omega_3 + d\cdot\omega_1\omega_3$. We show that this map is an isomorphism. The injectivity of $\varphi$ is clear from the linear independence of $1$, $\omega_1$, $\omega_3$, and $\omega_1\omega_3$. The surjectivity of $\varphi$ comes from the definition of $M_{L}$. Thus, $\Z^4 \cong M_{L}$.
\end{proof}
\begin{definition}[Matrix Form of UDG]
    \label{def:matrix form of udg}
    Define the map $\varphi: \mathbb{Z}^4 \rightarrow M_{L}$, where $\varphi((a,b,c,d)) = a\cdot 1 + b\cdot\omega_1 + c\cdot\omega_3 + d\cdot\omega_1\omega_3$. For a UDG $U = (V, E)$ with vertices on $M_L$, the matrix form of $U$ is represented as 
    $U=[\varphi^{-1}(v_1), \varphi^{-1}(v_2), \ldots, \varphi^{-1}(v_n)]$, where the vertex set $V = \{v_1, v_2, \ldots, v_n\}$ and the vertices are ordered.
\end{definition}
The matrix representation for each UDG comes from the inverse mapping of $\varphi$ applied to each vertex in the vertex set $V$. For a UDG with $n$ vertices, we can represent the graph $U$ by an $n \times 4$ integer matrix. This representation helps transform operations on a UDG into equivalent operations on an integer matrix, thereby simplifying computations. An example of such a representation is shown in Figure~\ref{moser}.
\begin{figure}[h]
    \centering
    \begin{minipage}{0.5\textwidth}
        \centering
        $\begin{bmatrix}
            0 & 0 & 0 & 0 \\
            1 & 0 & 0 & 0 \\
            0 & 1 & 0 & 0 \\
            0 & 0 & 1 & 0 \\
            0 & 0 & 0 & 1 \\
            1 & 1 & 0 & 0 \\
            0 & 0 & 1 & 1
        \end{bmatrix}$
    \end{minipage}%
    \begin{minipage}{0.5\textwidth}
        \centering
        \includegraphics[width = 250pt]{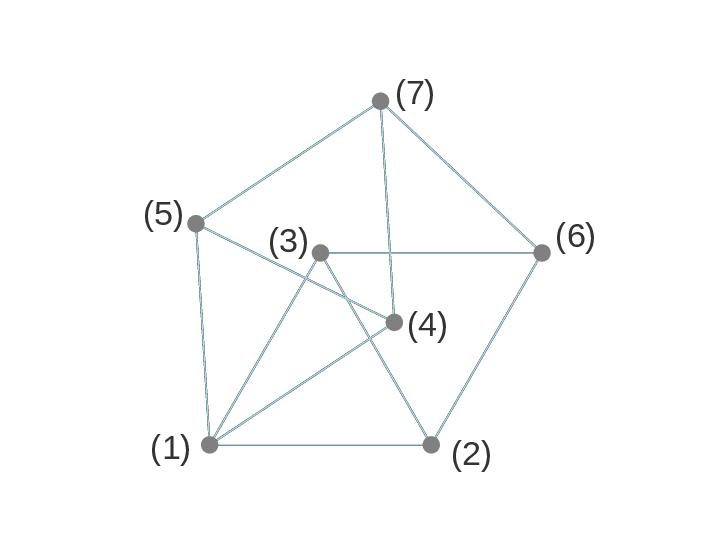}
    \end{minipage}
    \caption{The Moser spindle and its matrix representation.}
    \label{moser}
\end{figure}

\begin{theorem}\label{thm:18 unit vectors}
In the Moser lattice, there are 18 unit vectors in total.
\end{theorem}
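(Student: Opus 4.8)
The plan is to enumerate the set $\{z \in M_L : |z| = 1\}$ directly. Write a general element of $M_L$ as $z = p + q\,\omega_3$ with $p = a + b\omega_2$ and $q = c + d\omega_2$, both in the ring of Eisenstein integers $\Z[\omega_2] = \Z\langle 1, \omega_2\rangle$. From the minimal polynomials $\omega_2^2 - \omega_2 + 1 = 0$ and $\omega_3^2 - \tfrac53\omega_3 + 1 = 0$ one reads off $\overline{\omega_2} = 1 - \omega_2$, $\omega_2\overline{\omega_2} = 1$, $\overline{\omega_3} = \tfrac53 - \omega_3$ and $\omega_3\overline{\omega_3} = 1$; in particular $\Z[\omega_2]$ is stable under complex conjugation, so $p\bar q \in \Z[\omega_2]$, and we may write $p\bar q = s + t\omega_2$ with $s, t \in \Z$. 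Expanding $|z|^2 = z\bar z = |p|^2 + |q|^2 + p\bar q\,\overline{\omega_3} + \overline{p\bar q}\,\omega_3$ (using $\omega_3\overline{\omega_3} = 1$) and substituting $\overline{\omega_3} = \tfrac53 - \omega_3$, a short computation collecting the rational and $\sqrt{33}$ contributions gives
\[
  |z|^2 \;=\; |p|^2 + |q|^2 + \tfrac53\,s + \tfrac56\,t + \tfrac{\sqrt{33}}{6}\,t ,
\]
where $|p|^2 = a^2 + ab + b^2$ and $|q|^2 = c^2 + cd + d^2$ are the (nonnegative integer) Eisenstein norms.

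The first decisive step is then forced: since $|z|^2 = 1$ is rational while $\sqrt{33}$ is irrational and $t \in \Z$, we must have $t = 0$, i.e.\ $p\bar q$ equals a rational integer, say $p\bar q = S$; and then $S^2 = |p\bar q|^2 = |p|^2\,|q|^2$. Setting $P = |p|^2$, $Q = |q|^2$, the condition $|z| = 1$ becomes the integer system
\[
  3P + 3Q + 5S = 3, \qquad S^2 = PQ, \qquad P, Q \ge 0 .
\]
Discarding the trivial $z = 0$ we have $P + Q \ge 1$, so $S \le 0$ and $|S| = \sqrt{PQ} \le \tfrac12(P + Q)$; feeding this into $3P + 3Q + 5S = 3$ gives $P + Q \le 6$, and integrality of $S = \tfrac35(1 - P - Q)$ forces $P + Q \equiv 1 \pmod 5$, so $P + Q \in \{1, 6\}$. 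The value $1$ yields $(P,Q,S) \in \{(1,0,0),(0,1,0)\}$; the value $6$, combined with $S^2 = PQ$, yields $P = Q = 3$ and $S = -3$ (and $3 = 1 + 1 + 1$ is indeed an Eisenstein norm).

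It remains to count the vectors $z = p + q\omega_3$ realizing each triple; all of them lie in $M_L$ automatically because $p, q \in \Z[\omega_2]$, and since $\{1, \omega_2, \omega_3, \omega_2\omega_3\}$ is a $\Q$-basis, distinct pairs $(p,q) \in \Z[\omega_2]^2$ give distinct $z$. For $(1,0,0)$: $q = 0$ and $p$ runs over the six Eisenstein units, that is the six sixth roots of unity $\{\pm 1, \pm \omega_2, \pm(\omega_2 - 1)\}$. For $(0,1,0)$: $p = 0$ and $q$ is a unit, giving the six vectors $z = q\,\omega_3$. For $(3,3,-3)$: $p\bar q = -|q|^2$ forces $p = -q$, so $z = q(\omega_3 - 1)$ with $q$ ranging over the six Eisenstein integers of norm $3$ (the solutions of $a^2 + ab + b^2 = 3$), and one checks $|z|^2 = 3\,|\omega_3 - 1|^2 = 3 \cdot \tfrac13 = 1$. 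The three families are pairwise disjoint — they have, respectively, $q = 0$, $p = 0$, and $p, q$ both nonzero — and each consists of six distinct vectors, for a total of $6 + 6 + 6 = 18$.

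The proof contains no single hard idea; the part demanding the most care is the Diophantine bookkeeping — passing from the reality of $p\bar q$ to the clean constraint $S^2 = PQ$, bounding $P + Q$, and, should one instead solve the quadratic $9P^2 - 7PQ + 9Q^2 - 18P - 18Q + 9 = 0$ for $P$ at each $Q$ with $0 \le Q \le 3$, being sure to reject roots that are not nonnegative integers realizable as Eisenstein norms (e.g.\ the extraneous root $P = \tfrac43$ at $Q = 3$). The remaining steps are routine manipulations with the two quadratic minimal polynomials together with a count of lattice points of norm $1$ and $3$.
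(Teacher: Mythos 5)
Your proof is correct, and it takes a genuinely different route from the paper's. The paper computes the same quadratic form $p(a,b,c,d)$ in the four integer coordinates, diagonalizes its symmetric matrix $A=QDQ^T$ to get the a priori bound $-4\le a,b,c,d\le 4$, and then disposes of the finitely many cases \emph{programmatically}. You instead split $z=p+q\omega_3$ over the Eisenstein integers, use the irrationality of $\sqrt{33}$ to force $p\bar q\in\Z$, and reduce to the small Diophantine system $3P+3Q+5S=3$, $S^2=PQ$ in Eisenstein norms, which you solve entirely by hand; I checked the key identities ($s=ac+ad+bd$, $t=bc-ad$, matching the paper's $\sqrt{33}$-coefficient, $|\omega_3-1|^2=\tfrac13$, and the counts of norm-$1$ and norm-$3$ Eisenstein integers) and they are all right. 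What each approach buys: the paper's eigenvalue bound is mechanical and would transfer to other rank-$4$ lattices with little thought, but leaves the final enumeration to a computer; your argument eliminates the computer check entirely and, as a bonus, exhibits the $18$ units with structure --- three orbits of the six Eisenstein units, namely $u$, $u\omega_3$, and $u(1+\omega_2)(\omega_3-1)$ --- which explains the hexagonal symmetry visible in the paper's Figure~\ref{units}. The only stylistic quibble is the closing paragraph about the extraneous root $P=\tfrac43$, which describes an alternative bookkeeping you did not actually need; it could be dropped without loss.
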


\begin{proof}
Take an arbitrary element $z=a + b\omega_1 + c\omega_3 + d\omega_1\omega_3\in M_L$ with $a,b,c,d\in\mathbb Z$. It is of unit length if and only if
\begin{align*}
    1 &= z\overline{z} \\
      &= (a + b\omega_1 + c\omega_3 + d\omega_1\omega_3)\overline{(a + b\omega_1 + c\omega_3 + d\omega_1\omega_3)} \\
      &= \left(a^2 + ab + \frac53ac + \frac56ad + b^2 + \frac56bc + \frac53bd + c^2 + cd + d^2\right)
        + (bc - ad)\frac{\sqrt{33}}{6}.
\end{align*}
That is, $z$ has length 1 if and only if we have
$$
p(a, b, c, d) := a^2 + ab + \frac53ac + \frac56ad + b^2 + \frac56bc + \frac53bd + c^2 + cd + d^2=1\text{ and }ad = bc
$$

Let's denote by $A$ the symmetric matrix corresponding to the quadratic form $p(a, b, c, d)$:
$$
\text{with }\mathbf v:=\begin{pmatrix} a \\ b \\ c \\ d\end{pmatrix}\text{ and }
A := \begin{pmatrix}
    1 & \frac12 & \frac56 & \frac {5}{12} \\
    \frac12 & 1 & \frac{5}{12} & \frac56 \\
    \frac56 & \frac{5}{12} & 1 & \frac12 \\
    \frac{5}{12} & \frac56 & \frac12 & 1
\end{pmatrix}\text{ we have }p(a, b, c, d)=\mathbf v^TA\mathbf v.
$$
Let's take orthogonal eigenvectors:
$$
\text{with }Q:=\frac12\begin{pmatrix}
    1 & 1 & -1 & -1 \\
    1 & -1 & 1 & -1 \\
    1 & 1 & 1 & 1 \\
    1 & -1 & -1 & 1
\end{pmatrix}\text{ and }D:=\begin{pmatrix}
    \frac{11}{4} & 0 & 0 & 0 \\ 0 & \frac{11}{12} & 0 & 0 \\ 0 & 0 &  \frac{1}{12} & 0 \\ 0 & 0 & 0 & \frac14
\end{pmatrix}\text{ we have }A = QDQ^T.
$$
In particular, for $\begin{pmatrix}
    \tilde a & \tilde b & \tilde c & \tilde d
\end{pmatrix}^T := \tilde{\mathbf v} := Q^T\mathbf v$, we get 
$$
1 = \mathbf v^TA\mathbf v = \mathbf v^T QDQ^T\mathbf v^T = \tilde{\mathbf v}^TD\tilde{\mathbf v}
 = \frac{11}{4}\tilde a^2 + \frac{11}{12} \tilde b^2 + \frac{1}{12} \tilde c^2 + \frac14\tilde d^2
      \ge \frac{1}{12}\left(\tilde a^2 + \tilde b^2 + \tilde c^2 + \tilde d^2\right)
$$
Now as $Q$ is orthonormal, from $\|\tilde{\mathbf v}\|\le4$ we get $\|\mathbf v\|\le4$ and thus $-4\le a, b, c, d\le 4$. The finitely many choices for $a,b,c,d\in\mathbb Z$ we get can be checked programmatically to yield that indeed there are exactly 18 possibilities, displayed in Figure~\ref{units}.
\end{proof} 
Note that by this result, the degree of any vertex in a graph on the Moser lattice is at most 18. This limits the asymptotic behavior of maximally dense graphs on the lattice to below the previously established bounds. However, for relatively small graphs the Moser lattice is still a useful tool for studying maximally dense UDGs, as demonstrated by the fact that our search procedure finds embedded versions of all the graphs presented in Ágoston, Pálvölgyi \cite{agoston2022improved}, the collection of previously known densest graphs.

\begin{remark} (1) Inspired by the first preprint of this work, it was proven that in the family of lattices
    $$
    L_k=\{a\cdot1 + b\cdot\omega_1 + c\cdot\omega_k + d\cdot\omega_1\omega_k\mid a,b,c,d\in\mathbb Z\},
    $$
    the maximum number of unit vectors is unbounded \cite[Theorem 2.1]{ruhland2024families}.

    (2) Let $\alpha$ be a complex root of the polynomial $p(z)=z^4-z^3-z^2-z+1$.
    Then the lattice
    $$
    L=\{a\cdot1 + b\cdot\alpha + c\cdot\alpha^2 + d\cdot\alpha^3\mid a,b,c,d\in\mathbb Z\}
    $$
    has infinitely many unit vectors \cite[Theorem 2]{radchenko2021unit}.

    We checked some of these alternative lattices, but none gave better results than the Moser lattice.
\end{remark}

\subsection{Canonization} \label{sec:canon}
Consider some matrix $[[p_1], [p_2], ..., [p_n]]$ where each $[p_i]$ is a point on the Moser lattice, $[p_i]=[a_i, b_i, c_i, d_i] = a_i + b_i \cdot \omega_1 + c_i \cdot \omega_3+ d_i \cdot \omega_1 \omega_3$. Together, this array of points will constitute the vertex set of a planar unit distance graph. Immediately, there are $n!$ identical copies of this graph according to the permutations of this array. This may be addressed by treating this list as a set object.\\
However, when considering rotations and reflections, there are other possible embeddings of this graph in the Moser lattice. When considering translation as well, there are infinitely many more. Theoretically, this poses no issues. A search algorithm would still progress. In practice, however, treating multiple embeddings of the exact same graph increases run time for no added benefit.

The solution to this is to find a formula for creating a canonical representation of a graph in order to map these identical graphs to this canonical one.

In the Moser lattice, we find $12$ representations of a graph through rotations by $\frac{\pi}{3}$ radians and reflections swapping the pair of generators $\omega_1$ and $\omega_3$ along with the pair $\omega_1 \omega_3$ and $1$.

 To show why a $\frac{\pi}{3}$ radians rotation is guaranteed to remain on the Moser lattice, consider an alternative representation of the first three generators as follows: $$1 = e^0, \omega_1 = e^\frac{\pi i}{3}, \omega_3 = e^{i \cdot 2 \arcsin(\frac{1}{\sqrt{12}})}$$ Thus, the $\frac{\pi}{3}$ rotation is equivalent to the following mapping of generators: $$a \mapsto a \cdot \omega_1, b \cdot \omega_1 \mapsto -b + b \cdot \omega_1, c \cdot \omega_3 \mapsto c \cdot \omega_1 \omega_3, \text{ and } d \cdot \omega_1 \omega_3 \mapsto -d \cdot \omega_3 + d \cdot \omega_1 \omega_3.$$ 
 
Since we have defined the reflection and the rotation in terms of linear transformations of the generators, they may be interpreted as the following matrices: $$ \text{Rotation} = \begin{bmatrix} 0 & 1 & 0 & 0 \\ -1 & 1 & 0 & 0 \\ 0 & 0 & 0 & 1 \\ 0 & 0 & -1 & 1 \end{bmatrix} \text{; Reflection} = \begin{bmatrix} 0 & 0 & 0 & 1 \\ 0 & 0 & 1 & 0 \\ 0 & 1 & 0 & 0 \\ 1 & 0 & 0 & 0 \end{bmatrix}$$
Thus, for an $n \times 4$ matrix, where rows represent points and columns represent generators, representing a UDG in $M_L$, finding the 12 representations is a function $F$ from the space of integer valued $n \times 4$ matrices to the space of integer valued $n \times 4 \times 12$ tensors according to $U \mapsto [U, URo, U Ro^2, URo^3, URo^4,URo^5,$ \\$ URe, UReRo , UReRo^2, UReRo^3, UReRo^4, UReRo^5]$. For a collection of $m$ UDGs, extend this to be a function from $\mathbb{Z}^{m \times n \times 4}$ to $\mathbb{Z}^{m \times n \times 4 \times 12}$ such that $[U_1, ..., U_m] \mapsto [F(U_1), ..., F(U_m)]$. 

Now, the question of translation. After rotations and reflections, a UDG has 12 representations of itself, each as an $n \times 4$ integer matrix. For each of these 12, it is only necessary to perform a uniform translation of the generators---those being the columns of the matrix---such that the minimal coefficient of each generator in the matrix is 0. 

Given this final tensor in $\mathbb{Z}^{m \times n \times 4 \times 12}$, the question remains of how to map it to $\mathbb{Z}^{m \times n \times 4}$ by selecting one of the $12$ $n \times 4$ matrices for each of the $m$ graphs with the condition that if another tensor differs only by permutations of the $n \times 4$ matrices in each of the $m$ rows, the image will be the same. In other words, a canonical method of selecting one of the 12 representations. To do this, we use Zobrist hashing, which will be covered in section ~\ref{sec:zzz}, with the entire algorithm shown in ~\ref{sec:cana}.

Note that it is possible to construct two equivalent graphs that have distinct canonical forms. For example, $t_ 1 = \begin{bmatrix} 1 & 0 &  0 & 0 \\ 0 & 1 & 0 & 0 \\ -1 & 1 & 0 & 0 \end{bmatrix}$ and $t_2 = \begin{bmatrix} 2 & -1 & -2 & 1 \\ 1 & 1 & -1 & -1 \\ -1 & 2 & 1 & -2 \end{bmatrix}$ represent two triangles on the intersection of the unit circle and the Moser lattice with identical angles. However, the canonization algorithm maps $t_1 \mapsto t_1' = \begin{bmatrix} 0 & 0 & 0 & 0 \\ 0 & 0 & 1 & 0 \\ 0 & 0 & 1 & 1 \end{bmatrix}$ and $t_2 \mapsto t_2' = \begin{bmatrix} 1 & 3 & 0 & 0 \\ 0 & 2 & 1 & 1 \\ 1 & 0 & 0 & 3 \end{bmatrix}$.

Still, this canonization algorithm is a very useful tool as it vastly reduces the number of UDGs we need to go through in our search, and we can make a very fast implementation of it.

\begin{minipage}{0.35\textwidth}
\begin{figure}[H]
$\begin{bmatrix}
    -2&1&2&-1\\
-1&-1&1&1\\
-1&0&0&0\\
-1&1&0&0\\
-1&2&1&-2\\
0&-1&0&0\\
0&0&-1&0\\
0&0&-1&1\\
0&0&0&-1\\
0&0&0&1\\
0&0&1&-1\\
0&0&1&0\\
0&1&0&0\\
1&-2&-1&2\\
1&-1&0&0\\
1&0&0&0\\
1&1&-1&-1\\
2&-1&-2&1\\
\end{bmatrix}$
\end{figure}
\end{minipage} \hfill
\begin{minipage}{0.65\textwidth}
\begin{figure}[H]
\includegraphics{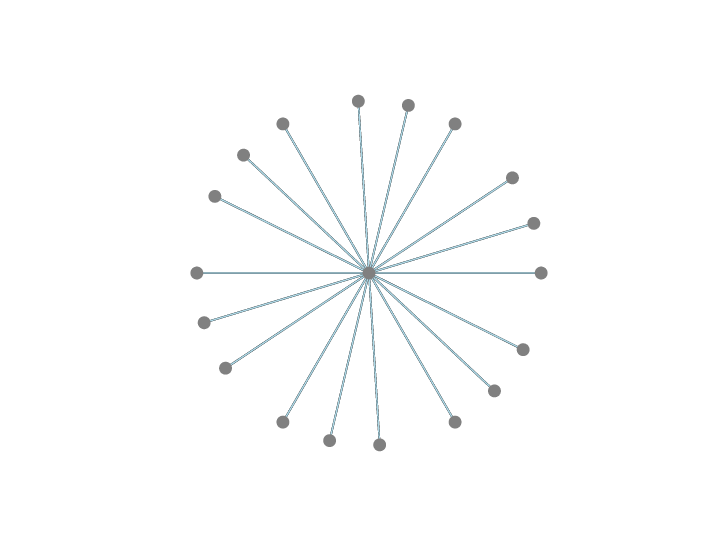}
\caption{The units of the Moser lattice with connections to the origin.}
\label{units}
\end{figure}
\end{minipage}

\section{Diverse Backtracking Beam Search}

In this study, we designed a heuristic beam search algorithm, which is a breadth first search algorithm with pruning. The beam search is used to find UDGs with optimal edges. First, we define the concepts of ``children'' and ``parents'' in the context of UDGs. Then, we introduce the forward and backward steps. Backtracking Beam Search is the integration of those two steps. Finally, in Subsection \ref{sec:dei}, we describe the outer loop with visitation count.

\begin{definition}[Children of a UDG] 
Let $U$ be a UDG with $n$ vertices. A child of $U$ is defined as the canonized version of any UDG that contains $U$ as a subgraph and has exactly $n + 1$ vertices.
\end{definition}

\begin{definition}[Parents of a UDG] 
Let $U$ be a UDG with $n$ vertices. A parent of $U$ is defined as the canonized version of any UDG of which $U$ is a subgraph and has exactly $n - 1$ vertices.
\end{definition}

\subsection{Starting Step}
Our beam search starts with the Moser spindle, which we found to give the best results after experimenting with various other starting graphs. During the first round of the search, we generate children from the Moser spindle by applying a predefined set of operations. Then, the top $\alpha$ UDGs are retained based on their edge number. If the number of generated children is less than $\alpha$, all children are preserved for the following search rounds. From here on, the algorithm chooses between forward and backward steps.

\subsection{Forward Step}
A similar procedure is followed for later forward steps: children are generated from the results of the $(n-1)^{\text{th}}$ round using a predefined set of operations. The top $\alpha$ UDGs are selected based on the number of edges. If the number of children produced in this round is fewer than $\alpha$, all children are preserved for the following search rounds. The hyperparameters for the beam search include the number of search iterations, $\alpha$, and the specific set of operations used for constructing children in each search round. The detailed choice of hyperparameters is in section ~\ref{sec:imp}.
\subsection{Backward Step}
The backward step does the opposite process of the forward step. In the $n^{th}$ round of the search, instead of finding children from $(n-1)^{th}$ round, the backward step finds all the parents of UDGs by deleting one vertex along with its associate edges. Then, we retain the top $\alpha$ UDGs based on the number of edges. The backward step allows us to potentially discover additional UDGs that may not have been identified in the previous search.
The pseudocode for our full diverse backtracking beam search implementation is included in section ~\ref{sec:backward}.

\subsection{Motivations}
A traditional beam search for the UDG problem is quite easy to motivate. Suppose we wanted to find every connected graph possible to compare their edge counts, which would indeed find all edge-dense graphs. One way to do so is to start with a single vertex, and then generate every possible unique 2-vertex graph under canonization. Note this reduces to adding a single vertex in unique locations relative to the current graph. Then we repeat this process on every 2-vertex graph found to generate graphs on three vertices, then four, etc. While this process would find all graphs eventually, it is simply not feasible. There are finitely many choices at each step (according to the 18 unit vectors), but the total number of graphs seen in the BFS would be too large.
So to reduce the time spent at each step, a beam search takes this basic model and cuts the number of children graphs used to generate the next group of graphs to a certain number, and we denote this final selection a beam. The remaining details covered in this section are then (1) How to generate new graphs efficiently (2) How to choose the best graphs for the beam (3) How to otherwise improve upon beam search.

To motivate the backward step, note that edge-dense graphs are often children or parents of each other, as since each additional vertex can only add so many edges, the previous edge count must have already been high. Then if we find an edge-dense graph in beam search, not only will adding vertices to it result in edge-dense graphs, but removing vertices will as well. However, traditional beam search does not allow for finding smaller relatives of edge-dense graphs. Backtracking Beam Search solves this issue by traversing up and down in graph size as long as edge-dense graphs are still being found.
Begin as in beam search by getting the children of the current beam, and take the beam of these children. If any of these children are new in this iteration and maximal among found graphs, we get the parents of this child beam. Then we check these parent graphs for new and maximal graphs, and continue taking the parent beam until we find none. We then take the final parent beam and take the children, assuming we find no new and maximal graphs we add these children to the parent set of the same size and take the beam of this combined collection, then continue getting children and combining the children and parent sets until we finally generate a new children set and combine it with the first one generated. If at any point we encounter more new and maximal graphs we once again begin this process.
\subsection{Diversity (visitation)} \label{sec:dei}
Beam search on its own is a greedy algorithm, as at each step of pruning, the ordering is based on the edge count at the current level. This works on the assumption that graphs with maximal edge counts at higher vertex counts have close to maximal edge counts at lower vertex counts. However, this does not always hold, and there are examples of maximally dense UDGs with significantly sub-optimal ancestors.
A solution to this issue is to implement a measure of diversity \cite{Vijayakumar}. Combining this number with the edge count to determine the ordering of graphs in pruning encourages the algorithm to better explore the search tree.

Our solution to this is a visitation count penalty. Naturally, this means that for the algorithm to be effective, the beam search is to be run multiple times. In each run, if a UDG is seen (only counting those that are kept in pruning), its visitation count is incremented. Then in the following runs, its score in the sorting for the pruning is decremented by its visitation count. Over the course of many runs, this allows for the discovery of UDGs with sub-optimal ancestors while still prioritizing the consistently dense sub-trees.
\section{Implementation}\label{sec:imp}
In order to successfully run the algorithms described above, several lower level choices must be made in how the operations required are processed by the computer. In addition, certain operations such as canonization require sub-processes independent of the larger algorithm, yet important in its performance nonetheless. Such choices are described in this section in more detail. 
\subsection{Vectorization}
In theory, given enough computational resources, even a simple tree search could eventually find every isomorphism class of connected UDGs in the Moser lattice up to a given vertex count. However, with limited resources and time, optimizing the implementation of the beam search is essential to efficiently produce results for larger numbers of vertices. In particular, given the enormous number of possible graphs generated by the beam search, the cost associated with performing operations such as canonization, finding children and parents, and evaluating edge counts for individual graphs is incredibly inefficient. Instead, we aim to vectorize all of these operations such that given a collection of graphs all of size $n$, we perform the operation simultaneously on all the graphs. This approach also allows the implementation to use GPU processing, which is much more efficient for such array manipulation. In specific our implementation uses vectorized array operations, representing the collection of graphs as three dimensional array of Moser lattice coefficients, each a small integer.
\subsection{Chunking}
In addition to optimizing functions for GPU processing, memory constraints also posed an important obstacle to efficiency. In particular the number of children returned at each size $n$ is roughly $n^4$ times the number of parents (determined by the first child operation), and even though an array storing all of these results could fit in memory, the matrices resulting from such an array passing through the canonization algorithm would not. So we manually set bounds for the maximum lengths of arrays that could be passed to the various functions at once. If an array was longer than the limit, it would be split into ``chunks'' of length equal to the limit. In particular, the limit scaled inverse to the number of points in the graph, as more points means more memory required for each graph. The only arrays invariably stored in memory at once were the visitation array and dictionary of new graphs.
\subsection{Children}
The implementations for finding children and parents are heavily optimized for the specific memory and processing constraints we had. To that end, in our implementation, for an array of parent graphs, that array is passed to each of three operation functions, which return an array of children corresponding to the three operations from section 3.1. Those results are concatenated and canonized to return only unique values. The three operation functions are as follows:
\subsubsection{Operation 1: Single Edge Addition}
Given a parent graph $G$ and a vertex $v \in V(G)$, generate $V(G') = V(G) \cup v'$ for a $v'$ in the set of generators times $\pm 1$, $\{\pm 1, \pm \omega_1, \pm \omega_3, \pm \omega_1 \omega_3 \}$. \\
Then to generate the full list of children under this operation, the 8 possible selections are represented as an $8 \times 4$ integer matrix. Given a current beam of $m$ UDGs represented as $n \times 4$ matrices, a vectorized addition is performed to get all of the $m \cdot n \cdot 8$ candidate new vertices. 

\subsubsection{Operation 2: Triangle Completion}
Given a parent graph $G$ and pair of vertices $u,v\in V(G)$ a unit distance apart, generate $V(G')=V(G)+v'$ for $v'$ a unit distance from both $u$ and $v$.\\
The formula to generate such a $v'$ as a function of the coefficients of $v$ and $u$ as vectors in $\mathbb{Z}^4$ is given by $v'=u+(v-u)r$, where $r$ is the $\frac{\pi}{6}$ rotation matrix. Note that taken $v$ as the first vertex generates the other possible triangle completion of $u,v$.
\subsubsection{Operation 3: Parallelogram Completion}
Given parent graph $G$ and vertices $v,u,w$ such that pairs $v,u$ and $v,w$ are unit distance apart generate $V(G')=V(G)+v'$ for $v'$ such that pairs $v',u$ and $v',w$ are a unit distance apart.

\subsection{Parents}
Finding clusters of edge-dense graphs using backtracking beam search relies also on finding subgraphs which lead to larger maximal graphs. These graphs can also therefore be found as parents of previously discovered graphs in the search. The algorithm for discovering parents is relatively simple and low-cost, and is as follows.\\
Given a graph $G$, we obtain the parent set of $G$ by creating some $V(G')=V(G)-v$ for all $v\in V(G)$. The set of unique canonized parents of $G$ is thus at most the size of $G$, and some subgraphs may reduce to the same canonized graph.

\subsection{Adjacent Edge Matrix}
In order to complete children operations 2 and 3, as well as evaluate the final edge counts of graphs, the set of edges in the unit-distance graph is required. To do so, the coordinates are multiplied by the generating set of the Moser lattice to produce a length-$n$ array of coordinates in the complex plane, where $n$ is the number of vertices in the graph. A new matrix is produced using slicing operations with the difference of each distinct pair of points located at the row and column index of those points. The absolute value of this matrix is taken, and then each value is checked to within a tolerance of 1, evaluating to 1 if so and 0 otherwise. A tolerance of $1\times10^{-8}$ was used, and is successful because of the limit in coefficient values to $-10$ through 10.
\subsection{Zobrist Hash}\label{sec:zzz}
The Zobrist Hash \cite{zobrist}, used most commonly in uniquely identifying chess board positions, is used here to give a permutation-invariant identification for each graph. That is, for a collection of points in our Moser lattice, the Zobrist hash will be identical no matter in which order those points are presented in the array. To produce the Zobrist hash, we will assign a different random 64-bit integer to each possible combination of values the four coefficients that represent a point on the Moser lattice can take. To this end, we must restrict the range of the coefficients---for the scale of the search presented here, [-10,+10] as lower and upper bounds sufficed. Then, we initiate a size $21^4=194481$ random array of 64-bit integers, and given a graph, the Zobrist hash is the exclusive-or of the random integers corresponding to the points in the graph. See Figure \ref{zobt} for an example.
The Zobrist hash as the final form of the canonization algorithm allows for permutation-invariance, but the Zobrist hashes alone cannot be transformed back into the original graph---instead, we created a visitation array.

\begin{figure}
\begin{multicols}{3}
\begin{tabular}{|m{3em}|m{5em}|}
\hline
0 & 11011101 \\
\hline
1 & 01100001\\
\hline
2 & 00000101\\
\hline
3 & 01011011\\
\hline
4 & 10101110\\
\hline
5 & 10110000\\
\hline
6 & 00000111\\
\hline
7 & 01101011\\
\hline
8 & 11101000\\
\hline
\end{tabular}

\begin{tabular}{|m{3em}|m{5em}|}
\hline
0 & 11011101 \\
\hline
\textbf{1} & \textbf{01100001}\\
\hline
2 & 00000101\\
\hline
\textbf{3} & \textbf{01011011}\\
\hline
4 & 10101110\\
\hline
5 & 10110000\\
\hline
6 & 00000111\\
\hline
7 & 01101011\\
\hline
\textbf{8} & \textbf{11101000}\\
\hline
\end{tabular}

\begin{tabular}{|m{3em}|m{5em}|}
\hline
\textbf{XOR} & \textbf{00010010}\\
\hline
\end{tabular}
\begin{figure}[H]
\includegraphics[width=.3\textwidth]{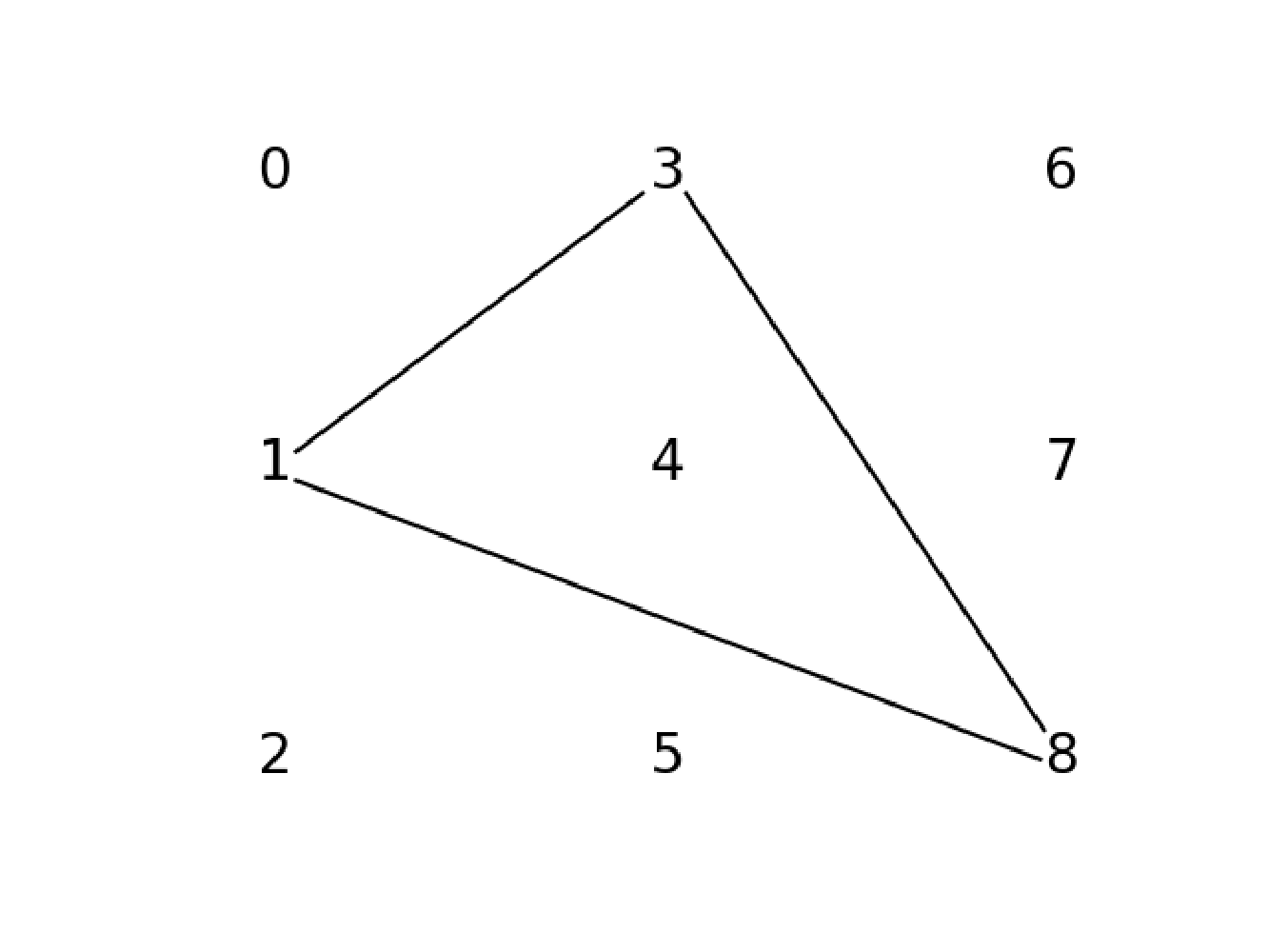}
\label{zobrist}
\end{figure}
\end{multicols}
\caption{The Zobrist Hash}
\label{zobt}
\end{figure}
\subsection{Visitation Array}
Once again the Zobrist hashes come into use, this time in the implementation of ``visitation,'' a notion introduced in section ~\ref{sec:dei}. To store the visitation as an integer for each graph, we use the Zobrist hashes as indices, and store the visitation count for each graph in an array. Because the hashes are 64 bits, an array with an index for each possible hash would be $2^{64}$, far too large to handle in memory on in storage. Instead we choose some number of bits from the front of each hash (the ``head'') to serve as the index---in our implementation an array of size $2^{28}$ is initiated and the first 28 bits of each hash is the index of its visitation count. While this increases the odds of collisions, the expected number of collisions is still small and unlikely to noticeably affect the discovery of any maximal graphs.

\subsection{Canonization Algorithm} \label{sec:cana}
With Zobrist hashing now explained, the canonization algorithm from section 3.2 is now completed as Algorithm~\ref{alg:canon}.

\begin{algorithm}
\caption{Canonization\\The argument, $B$, represents $m$ UDGs of $n$ vertices. Note that in our implementation, we let $max\_size$ be 21. Additionally, $Re$ and $Ro$ are as defined in section \ref{sec:canon}.}\label{alg:canon}
\begin{algorithmic}
\State $R \gets [\mathbb{I}_4, Ro, Ro^2, Ro^3, Ro^4, Ro^5, Re, ReRo, ReRo^2, ReRo^3, ReRo^4, ReRo^5]^\top \in \mathbb{Z}^{4 \times 4 \times 12}$
\State $Keys \gets \text{Zobrist hashing keys} \in \mathbb{Z}^{max\_size^4}$
\State $C = [max\_size^i \text{ for } i \in [0, 3] \cap \mathbb{Z} ]$
\Procedure{Canonization}{$B \in \mathbb{Z}^{m \times n \times 4}$}
\State $A_{i, r, j, l} \gets \sum_k B_{i, j, k} \cdot R_{l, k, r}$
\Comment{$A \in \mathbb{Z}^{m \times 12 \times n \times 4}$}
\State $M_{i, r, l} \gets \min_j A_{i, r, j, l}$
\Comment{$M \in \mathbb{Z}^{m \times 12 \times 4}$}
\State $A_{i, r, j, l} \gets A_{i, r, j, l} - M_{i, r, l}$
\State $L_{i, r, j} \gets \sum_l A_{i, r, j, l} \cdot C_l$
\Comment{$L \in \mathbb{Z}^{m \times 12 \times n}$}
\State $V_{i, r, j} \gets Keys_{L_{i, r, j}}$
\State $D_{i, r} \gets \bigoplus_j V_{i, r, j}$
\Comment{$\bigoplus$ is a bitwise XOR; $D \in \mathbb{Z}^{m \times 12}$}
\State $I_i \gets \argmax_jD_{i,j}$
\Comment{$I \in \mathbb{Z}^m$}
\State $H_i \gets D_{i, I_i}$
\Comment{$H$ represents the hashes; $H \in \mathbb{Z}^m$}
\State $A_{i, j, l} \gets A_{i, I_i, j, l}$
\Comment{$A \in \mathbb{Z}^{m \times n \times 4}$}
\State $U_{i, j} \gets \sum_l A_{i, j, l} \cdot C_l$ \Comment{$U \in \mathbb{Z}^{m \times n}$}
\State $I \gets \argsort(U)$
\Comment{Performed along the rows}
\State $A_{i, j, l} \gets A_{i, I_{i, j}, l}$
\Comment{$A \in \mathbb{Z}^{m \times n \times 4}$}
\State \textbf{return} A, H
\EndProcedure
\end{algorithmic}
\end{algorithm}

\subsection{Diverse Backtracking Beam Search Algorithm} \label{sec:backward}
In this vectorized setting, the backtracking beam search algorithm explained in 3.2 and developed in Matolcsi et al. \cite{matolcsi2023fractional} is implemented with diversity (visitation) as Algorithm~\ref{alg:back}.
\begin{algorithm}
\caption{Diverse backtracking beam search. Where $B$ is the UDG beam, $L$ is the indices of the highest edge count UDGs in $B$, $M$ contains the maximum old scores, and $O$ contains those edge counts.}\label{alg:back}
\begin{algorithmic}
\Procedure{Backward}{$B, L, M, O$}
\State $\mathscr{B}, \mathscr{L}, \mathscr{M}, \mathscr{O} \gets \{B\}, \{L\}, \{M\}, \{O\}$ \Comment{Initialize lists with current values}
\State $PB, PL, PM, PO \gets B, L, M O$ \Comment{Initialize parents with current values}
\While {True}
\State $PB, PL, PM, PO \gets \text{Parent values associated with } PB$
\State \textbf{break if} $PB$ is empty \textbf{or} the vertex counts are $\leq 4$
\State $\mathscr{B}, \mathscr{L}, \mathscr{M}, \mathscr{O} \gets \mathscr{B} \cup \{PB\}, \mathscr{L} \cup \{PL\}, \mathscr{M} \cup \{PM\}, \mathscr{O} \cup \{PO\}$ \Comment{Temporally ordered lists}
\State \textbf{break if} $PM_{PL_{{1}_1}} < T_{\dim(PB_1)}$ \Comment{$T$ is the list of highest edge counts at each vertex}
\State $C, H \gets \textsc{Canonization}(PB_{PL})$ \Comment{The indices of $PB$ from $PL$}
\State \textbf{break if} all visitation counts of $C$ are not $0$
\EndWhile
\State \textbf{return} $B, L, M, O$ \textbf{if} $\mathscr{B} \setminus \{PB\} = \emptyset$ \textbf{else continue} \Comment{List of parents has one element}

\For {$\ell \gets \# \mathscr{B} $ \textbf{to} $1$}
\State $B, L, M, O \gets \mathscr{B}_{\ell}, \mathscr{L}_\ell, \mathscr{M}_\ell, \mathscr{O}_\ell$ \Comment{$\ell$th element in list}
\State $CB, CL, CM, CO \gets$ Child values associated with $B$
\State $check \gets$ False
\If {$\# CB = 0 \lor \dim(CB_0) \leq 6$} \textbf{continue}
\ElsIf {$CM_{{CL_1}_1} > M_{{L_1}_1}$} $check \gets$ True
\ElsIf {$CM_{{CL_1}_1} = T_{\dim(CB_1)}$}
\State $C, H \gets \textsc{Canonization}(CB_{CL})$ \Comment{The indices of $CB$ from $CL$}
\State $check \gets$ at least one zero visitation count in $C$
\EndIf
\If {$check$}
\State $GB, GL, GM, GO \gets \textsc{Backward}(CB, CL, CM, CO)$ \Comment{Recursive step}
\State $check \gets \# GB = 0$
\EndIf
\State $AO, AM \gets O \cup CO \cup GO, M \cup CM \cup GM$
\State $SO \gets$ reversed sort of $AO$
\State $t \gets SO_{\min(b_{\dim(B_1)}, \dim(AO_1))}$ \Comment{$b$ is the beam width array; $t$ is the score threshold for viability}
\State $B \gets \{B_i : O_i \geq t\} \cup \{CB_i : CO_i \geq t\} \cup \{GB_i : GO_i \geq t\}$ \Comment{Third set is empty if not $check$}
\State $O, M \gets \{AO_i : AO_i \geq t\}, \{AM_i : AO_i \geq t\}$
\State $\mathscr{B}_\ell, \mathscr{M}_\ell, \mathscr{O}_\ell \gets B, M, O$
\EndFor
\State \textbf{return} $\mathscr{B}_1, \{i : \mathscr{M}_1 = \max(\mathscr{M}_1) \}, \mathscr{M}_1, \mathscr{O}_1$

\EndProcedure 
\end{algorithmic}
\end{algorithm}

\section{Results and Discussion} 
%
In order to help illustrate the problem, Figure~\ref{g1g9} depicts one of the maximal UDGs for vertex counts 1--9.
\begin{figure}[h]
\centering
\setlength{\lineskip}{0pt}

\makebox[0pt][r]{}
\includegraphics[width=0.25\textwidth]{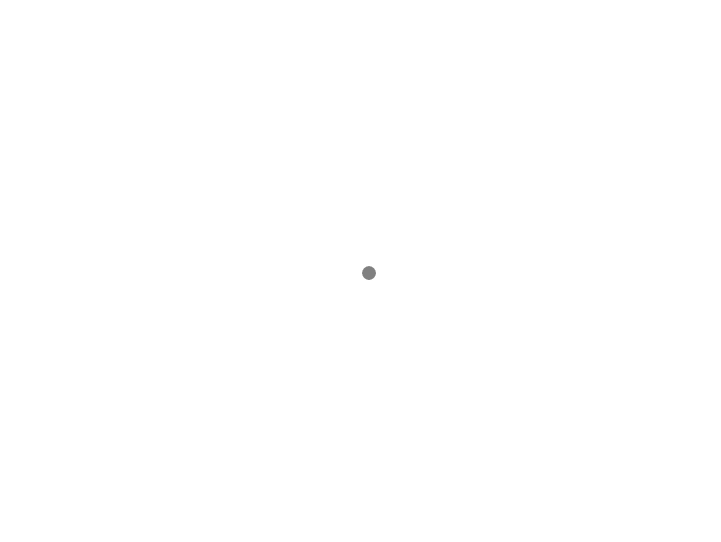}
\includegraphics[width=0.25\textwidth]{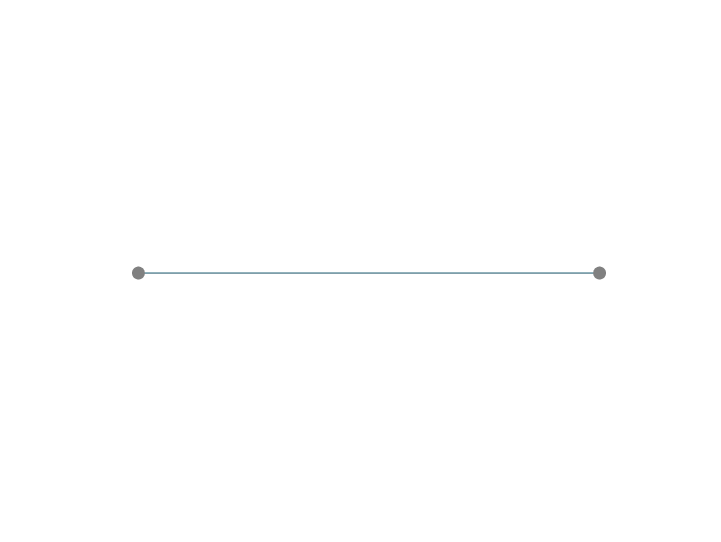}
\includegraphics[width=0.25\textwidth]{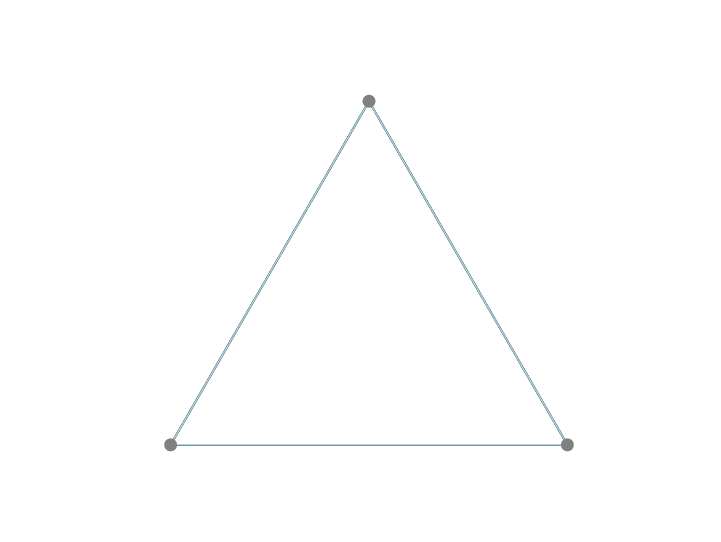}\\
\makebox[0pt][r]{}
\includegraphics[width=0.25\textwidth]{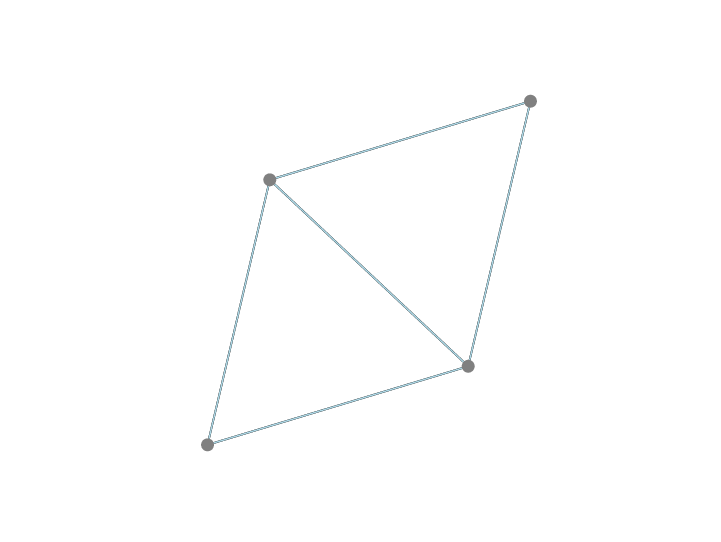}
\includegraphics[width=0.25\textwidth]{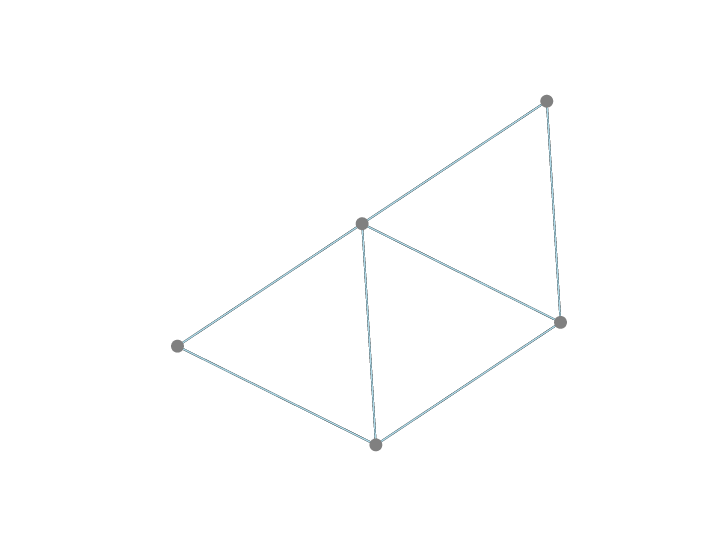}
\includegraphics[width=0.25\textwidth]{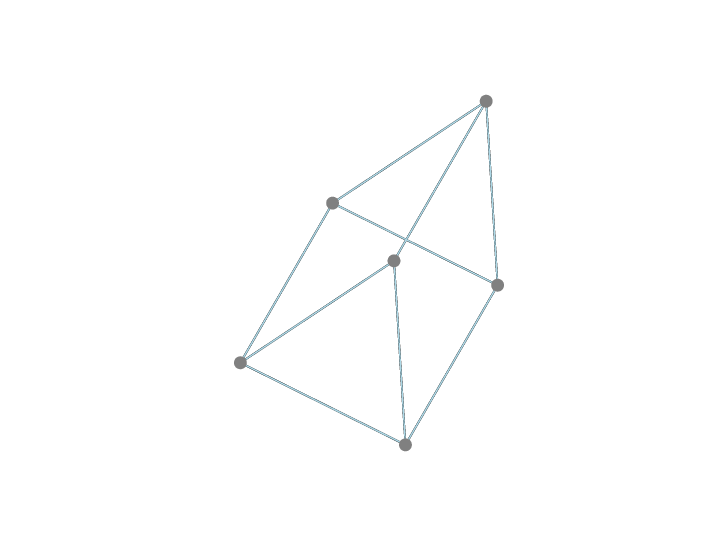}\\
\makebox[0pt][r]{}
\includegraphics[width=0.25\textwidth]{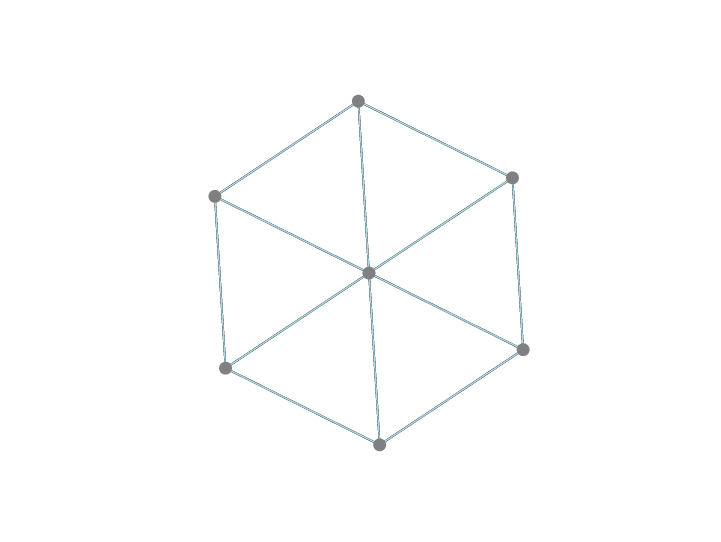}
\includegraphics[width=0.25\textwidth]{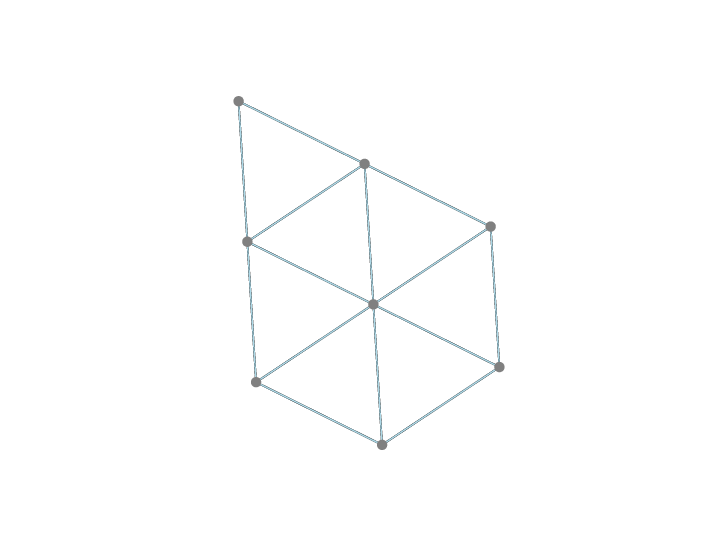}
\includegraphics[width=0.25\textwidth]{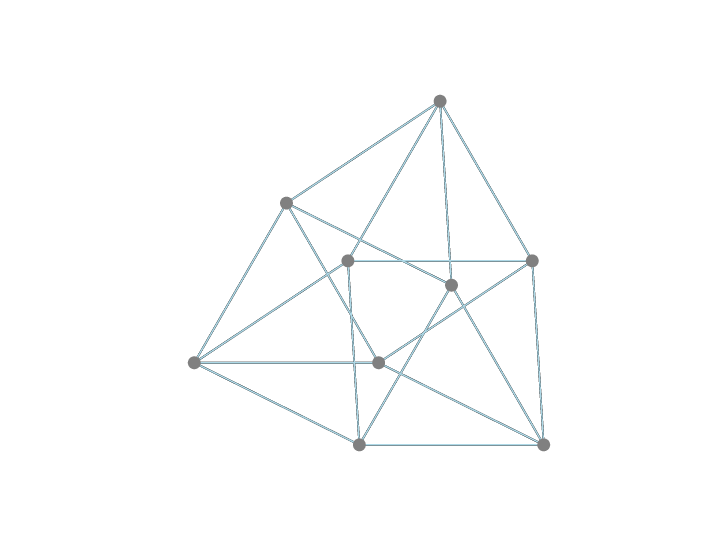}
\caption{Small vertex count maximal UDGs.}
\label{g1g9}
\end{figure}

\subsection{New Isomorphism Classes}
In addition to the graphs of 27--30 vertices found in \cite[Table 1]{agoston2022improved}, our algorithm generated previously unpublished graphs at those vertex counts with matching edge counts. For $n = 30$, a second isomorphism class with 93 edges was found. The new embeddings for 27--29 are shown in Figure~\ref{g27g29}, and the 30-vertex graphs are shown in Figure~\ref{g30}.

\begin{figure}
    \centering
    \begin{subfigure}{0.39\textwidth} 
        \centering
        \includegraphics[width=\linewidth]{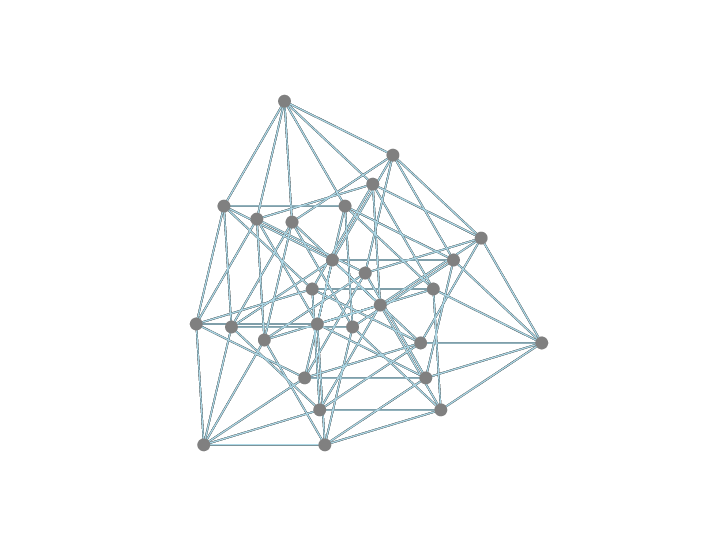}
    \end{subfigure}
    \hspace{-5em}
    \begin{subfigure}{0.39\textwidth} 
        \centering
        \includegraphics[width=\linewidth]{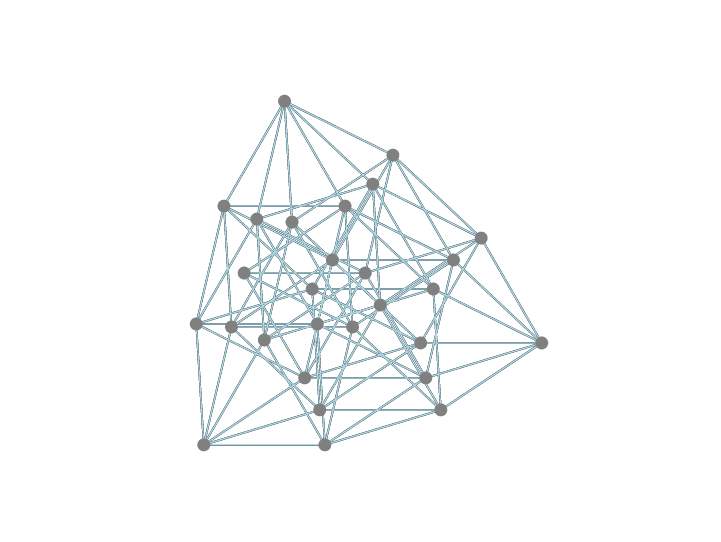}
    \end{subfigure}
    \hspace{-5em}
    \begin{subfigure}{0.39\textwidth} 
        \centering
        \includegraphics[width=\linewidth]{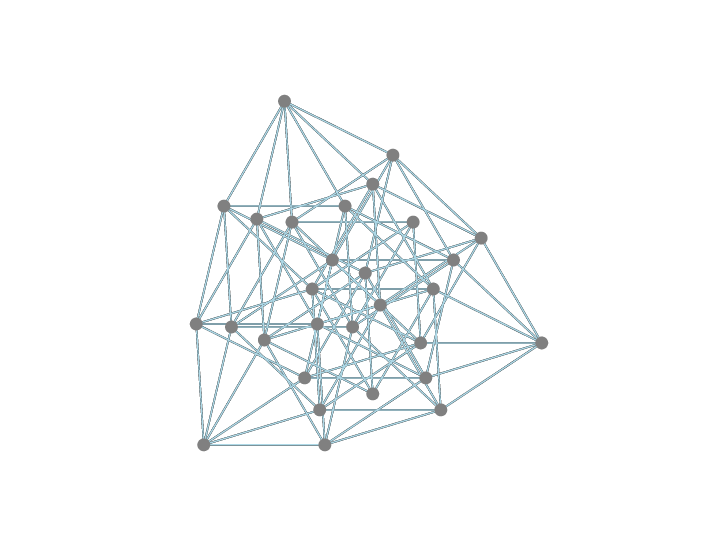}
    \end{subfigure}
    \caption{New 27, 28, and 29-vertex graphs, with edge counts 81, 85, 89, respectively.}
    \label{g27g29}
\end{figure}

\begin{figure}
    \centering
    \begin{subfigure}{0.54\textwidth} 
        \centering
        \includegraphics[width=\linewidth]{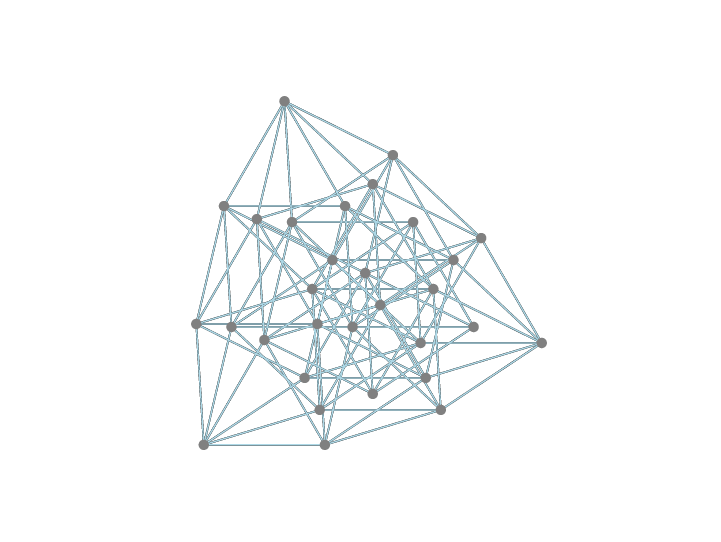}
    \end{subfigure}
    \hspace{-5em}
    \begin{subfigure}{0.54\textwidth}
        \centering
        \includegraphics[width=\linewidth]{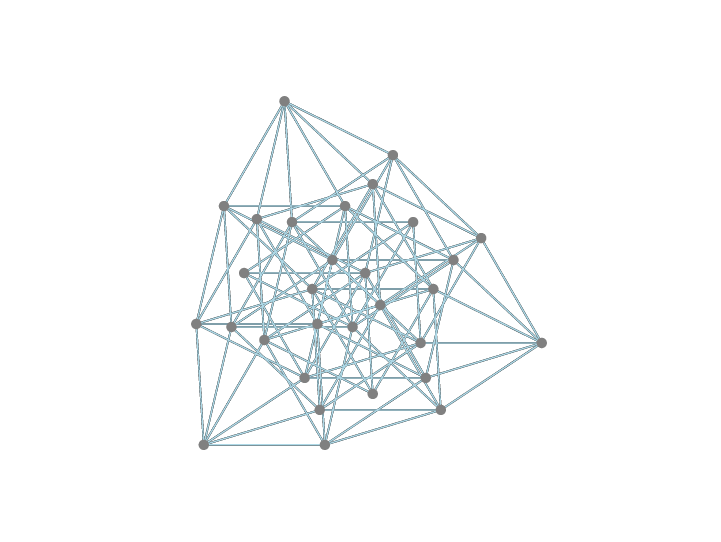}
    \end{subfigure}
    \caption{Two non-isomorphic 30-vertex graphs of 93 edges.}
    \label{g30}
\end{figure}

\begin{figure}
    \centering
    \includegraphics{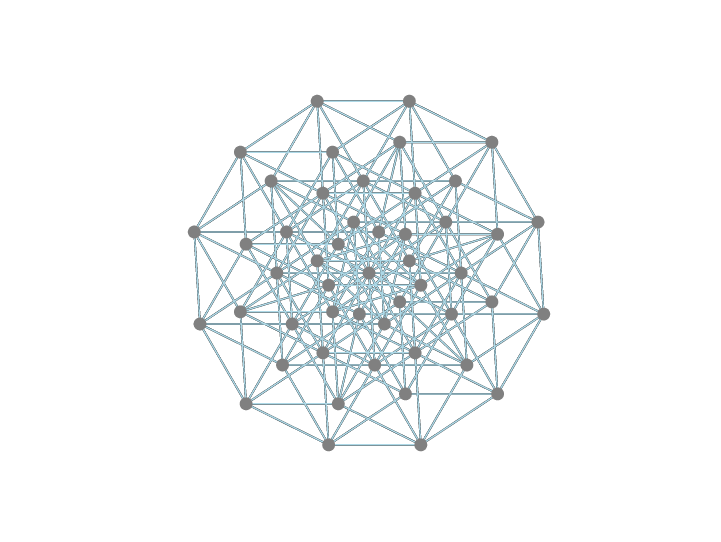}
    \caption{Graph of 49 vertices and 180 edges.}
    \label{g49}
    \end{figure}

\subsection{Minkowski sums}

Our search algorithm finds all the graphs documented in \cite[Table 1]{agoston2022improved}. This guarantees that it has found all known-optimal graphs for $n \leq 15$, and is a strong hint about its performance for $15 \leq n \leq 30$, where there are clear candidates, but their optimality is an open question. However, the question remains: are the graphs it finds for larger values of $n$ optimal? Surely they not always are, but below we give a completely informal heuristic argument for the optimality of some of the larger graphs the algorithm uncovers.

The Minkowski sum of two sets is defined as the pairwise sum of elements $A+B = \{a+b: a \in A, b \in B\}$. We will call this a \emph{disjoint} Minkowski sum, if $|A+B|=|A||B|$. An interesting property of many of the optimal small-vertex UDGs is that they are Minkowski sums, more specifically, they can be constructed from very small UDGs by repeated Minkowski summation. For example, the optimal 9-vertex UDG can be created by summing two unit triangles, and the conjectured-optimal 21-vertex UDG can be created as a disjoint Minkowski sum of a unit triangle and a 6-wheel, where the 6-wheel is itself a non-disjoint Minkowski sum of three edges.

There is only a single 49-vertex graph our algorithm uncovers as potentially optimal (Figure~\ref{g49}). It turns out to be a disjoint Minkowski sum of two 6-wheels, which makes it a non-disjoint Minkowski sum of 6 edges. This graph is identical to graph $G_{49}$ defined by \cite{Exoo2020}, a crucial ingredient to their proof to $\chi(\R^2) \geq 5$. 

Similarly, there is only a single 64-vertex graph that our algorithm uncovers as potentially optimal, and it turns out to be the disjoint Minkowski sum of 6 edges. In other words, it is a flattened 6-dimensional hypercube. 

Some more examples of this phenomenon: The algorithm uncovers potentially optimal 24-vertex and 28-vertex graphs that are non-disjoint Minkowski sums of 5 edges. The single 98-vertex graph our algorithm uncovers as potentially optimal is the Minkowski sum of 4 edges and a 7-vertex UDG. Two more examples are shown in Figures~\ref{minkowski1}~and~\ref{minkowski2}.

It is notable that the fraction of Minkowski sums among the densest known UDGs is $44.2\%$, while the fraction of Minkowski sums among all the UDGs visited by the beam search is $5.6\%$. 
The algorithm does not have any intrinsic bias towards visiting Minkowski sums. The fact that the algorithm nevertheless uncovers these very regular structures can be considered weak indirect evidence that they are indeed optimal, at least among UDGs that are subsets of the Moser lattice.

\begin{figure}[h]
  \centering
  \begin{subfigure}[b]{0.3\linewidth}
    \centering
    \includegraphics[width=\linewidth]{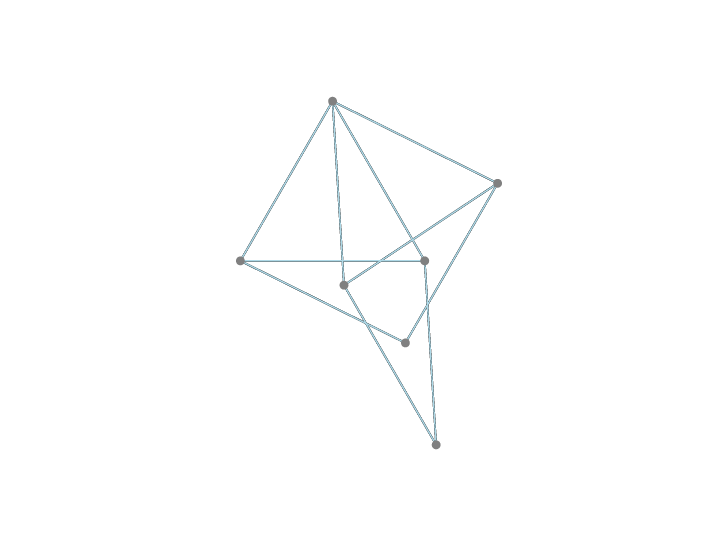}
  \end{subfigure}
    \begin{subfigure}[b]{0.02\linewidth}
    \centering
    \raisebox{3\height}{\huge{+}}
  \end{subfigure}
  \begin{subfigure}[b]{0.3\linewidth}
    \centering
    \includegraphics[width=\linewidth]{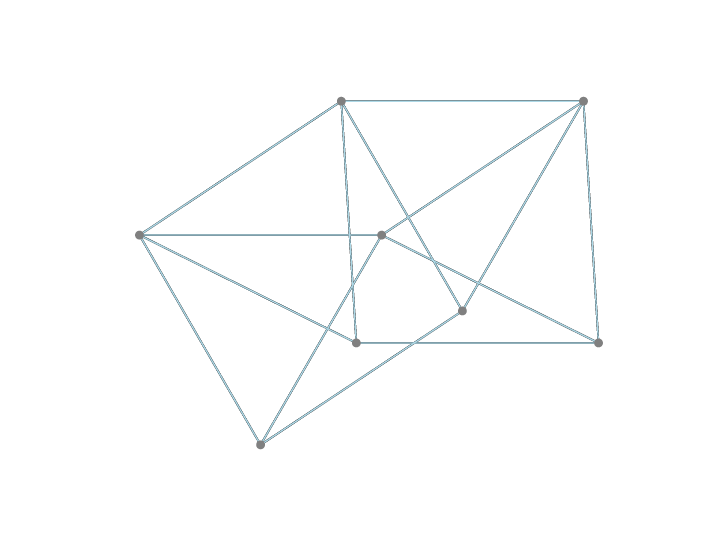}
  \end{subfigure}
    \begin{subfigure}[b]{0.02\linewidth}
    \centering
    \raisebox{5\height}{\huge{=}}
  \end{subfigure}
  \begin{subfigure}[b]{0.3\linewidth}
    \centering
    \includegraphics[width=\linewidth]{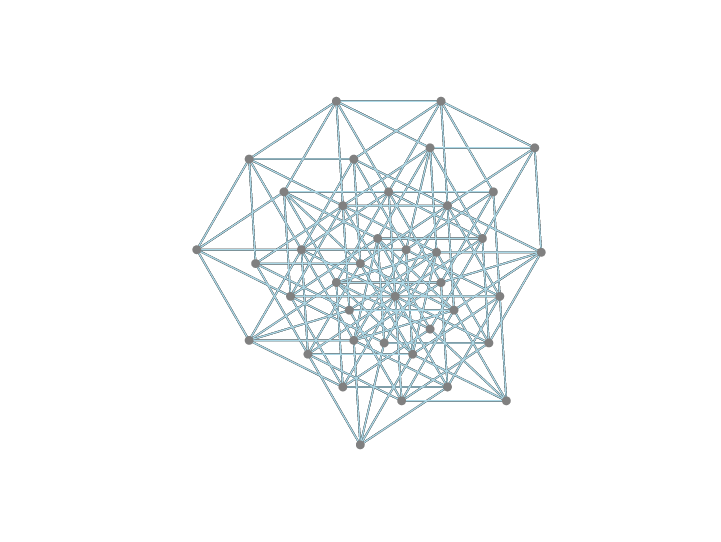}
  \end{subfigure}
  \caption{The Minkowski sum of a 7-vertex UDG and an 8-vertex UDG produces the densest-known 39-vertex UDG.}
  \label{minkowski1}
\end{figure}

\begin{figure}[h]
  \centering
  \begin{subfigure}[b]{0.3\linewidth}
    \centering
    \includegraphics[width=\linewidth]{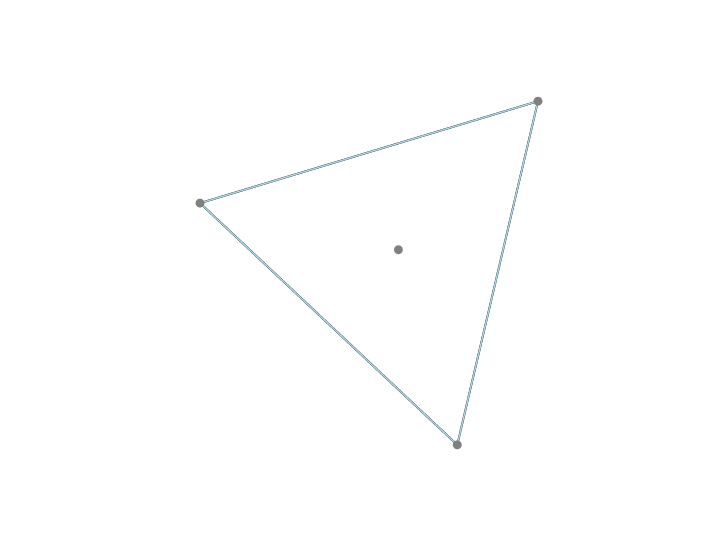}
  \end{subfigure}
    \begin{subfigure}[b]{0.02\linewidth}
    \centering
    \raisebox{3\height}{\huge{+}}
  \end{subfigure}
  \begin{subfigure}[b]{0.3\linewidth}
    \centering
    \includegraphics[width=\linewidth]{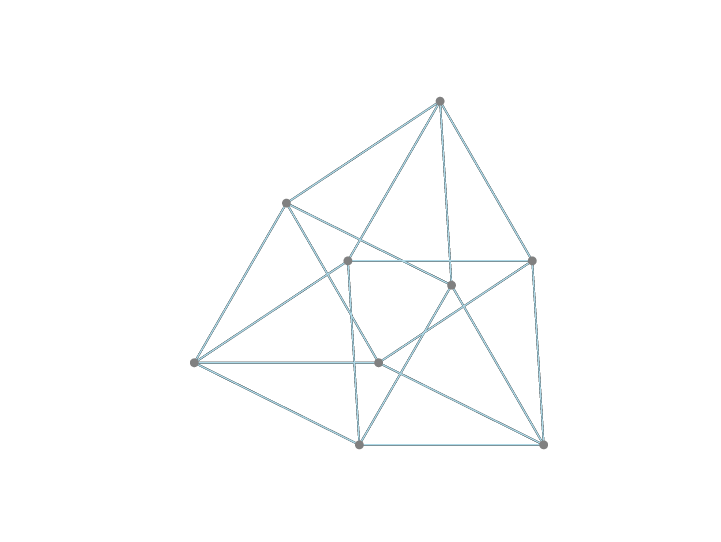}
  \end{subfigure}
    \begin{subfigure}[b]{0.02\linewidth}
    \centering
    \raisebox{5\height}{\huge{=}}
  \end{subfigure}
  \begin{subfigure}[b]{0.3\linewidth}
    \centering
    \includegraphics[width=\linewidth]{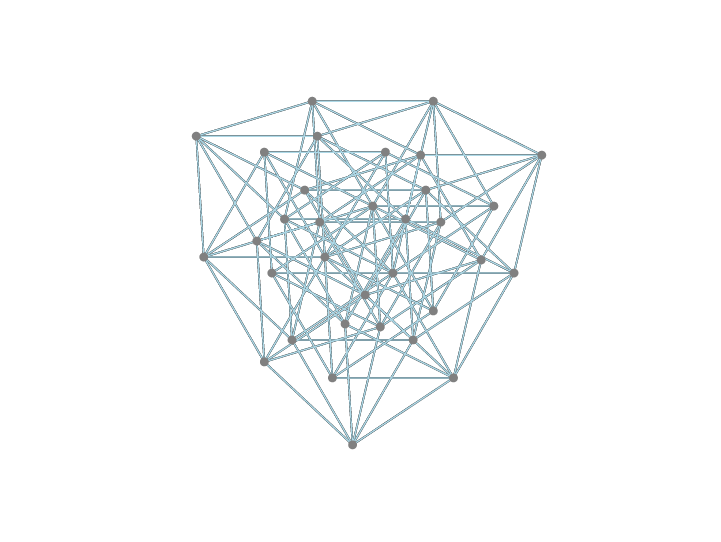}
  \end{subfigure}
  \caption{The Minkowski sum of a 4-vertex UDG---a triangle with an isolated center---and a 9-vertex UDG produces the densest-known 33-vertex UDG. The 9-vertex UDG is itself the Minkowski sum of two triangles.}
  \label{minkowski2}
\end{figure}


\subsection{Finding 27}
Initial tests of the beam search matched the findings of Ágoston, Pálvölgyi \cite{agoston2022improved} up to 26 vertices, but failed to find known maximal graphs of 27-30 vertices. A separate computer search determined that finding the 27 graph required a graph somewhere in the parent chain with at least 5 fewer edges than is maximal. In a traditional beam search, this would require a beam large enough to include all graphs that are up to 4 edges sub-optimal. With diversity, it would require traversing every superior graph of that size a number of times equal to the difference in edge count. To solve this problem, we developed two changes to the algorithm. Firstly, an improved backward search, capable of traversing backward multiple levels instead of just one, and secondly the complete vectorization of the algorithm to allow larger beam widths. With these changes, not only did our algorithm find the previously known 27-30 graphs but also an additional family of such graphs. Having found all known maximal graphs only then did we feel confident in running the algorithm up to larger $n$.

\subsection{Vectorization Results}
As alluded to throughout the paper, vectorization increased the capabilities of the algorithm in terms of speed and beam width. In addition, several intermediary operations for tracking and database building purposes have been added to the vectorized code, which take up part of the shown runtime in Table~\ref{vecspeed}.
\begin{table}[h]
\begin{tabular}{|c|c|}
    \hline
    Beam width & Time to run algorithm to 30 vertices (s) \\
    \hline
    \hline
    \multicolumn{2}{|c|}{Without vectorization}\\
    \hline
    10 & 6.88 seconds\\
    \hline
    100 & 71.14 seconds \\
    \hline
    \multicolumn{2}{|c|}{With vectorization}\\
    \hline
    10 & 4.05 seconds\\
    \hline
    100 & 9.41 seconds\\
    \hline
    1000 & 16.41 seconds\\
    \hline
\end{tabular}
  \caption{Comparison of search times with and without vectorization.}
  \label{vecspeed}
\end{table}

Of course canonization and visitation mean that the same amount of computation also results in more novel graphs, further improving efficiency (though in ways less simply measured).

\subsection{General Results}
For vertex counts 1--100, we show the corresponding edge counts in Table~\ref{edgecounts}.

\begin{table}[h]
  \begin{minipage}{.22\textwidth}
    \centering
    \begin{tabular}{|c|c|c|}
      \hline
      V & E & I \\[0.5ex]
      \hline \hline
      1 & 0 & 1 \\
      \hline
      2 & 1 & 1 \\
      \hline
      3 & 3 & 1 \\
      \hline
      4 & 5 & 1 \\
      \hline
      5 & 7 & 1 \\
      \hline
      6 & 9 & 4\\
      \hline
      7 & 12 & 1\\
      \hline
      8 & 14 & 3\\
      \hline
      9 & 18 & 1 \\
      \hline
      10 & 20 & 1\\
      \hline
      11 & 23 & 2\\
      \hline
      12 & 27 & 1\\
      \hline
      13 & 30 & 1\\
      \hline
      14 & 33 & 2\\
      \hline
      15 & 37 & 1\\
      \hline
      16 & 41 & 1\\
      \hline
      17 & 43 & 6\\
      \hline
      18 & 46 & 16\\
      \hline
      19 & 50 & 3\\
      \hline
      20 & 54 & 1\\
      \hline
      21 & 57 & 5\\
      \hline
      22 & 60 & 35\\
      \hline
      23 & 64 & 10\\
      \hline
      24 & 68 & 7\\
      \hline
      25 & 72 & 3\\
      \hline
    \end{tabular}
  \end{minipage}
  \hfill
  \begin{minipage}{.22\textwidth}
    \centering
    \begin{tabular}{|c|c|c|}
      \hline
      V & E & I \\[0.5ex]
      \hline \hline
      26 & 76 & 2\\
      \hline
      27 & 81 & 1\\
      \hline
      28 & 85 & 2\\
      \hline
      29 & 89 & 1\\
      \hline
      30 & 93 & 2\\
      \hline
      31 & 97 & 2\\
      \hline
      32 & 101 & 4\\
      \hline
      33 & 105 & 9\\
      \hline
      34 & 109 & 20\\
      \hline
      35 & 114 & 1\\
      \hline
      36 & 119 & 1\\
      \hline
      37 & 123 & 5\\
      \hline
      38 & 128 & 1\\
      \hline
      39 & 132 & 6\\
      \hline
      40 & 137 & 1\\
      \hline
      41 & 141 & 3\\
      \hline
      42 & 146 & 1\\
      \hline
      43 & 150 & 5\\
      \hline
      44 & 155 & 1\\
      \hline
      45 & 160 & 1\\
      \hline
      46 & 164 & 4\\
      \hline
      47 & 169 & 2\\
      \hline
      48 & 174 & 2\\
      \hline
      49 & 180 & 1\\
      \hline
      50 & 183 & 5\\
      \hline
    \end{tabular}
  \end{minipage}
  \hfill
  \begin{minipage}{.22\textwidth}
    \centering
    \begin{tabular}{|c|c|c|}
      \hline
      V & E & I \\[0.5ex]
      \hline \hline
      51 & 188 & 2 \\
      \hline
      52 & 192 & 21\\
      \hline
      53 & 197 & 6\\
      \hline
      54 & 202 & 2\\
      \hline
      55 & 206 & 29\\
      \hline
      56 & 211 & 7\\
      \hline
      57 & 216 & 6\\
      \hline
      58 & 221 & 3\\
      \hline
      59 & 226 & 1\\
      \hline
      60 & 231 & 3\\
      \hline
      61 & 235 & 53\\
      \hline
      62 & 240 & 43\\
      \hline
      63 & 246 & 2\\
      \hline
      64 & 252 & 1\\
      \hline
      65 & 256 & 3\\
      \hline
      66 & 261 & 1\\
      \hline
      67 & 266 & 1\\
      \hline
      68 & 271 & 1\\
      \hline
      69 & 276 & 1\\
      \hline
      70 & 281 & 3\\
      \hline
      71 & 286 & 2\\
      \hline
      72 & 291 & 3\\
      \hline
      73 & 296 & 6\\
      \hline
      74 & 301 & 12\\
      \hline
      75 & 306 & 25\\
      \hline
    \end{tabular}
  \end{minipage}
    \hfill
  \begin{minipage}{.22\textwidth}
    \centering
    \begin{tabular}{|c|c|c|}
      \hline
      V & E & I \\[0.5ex]
      \hline \hline
      76 & 312 & 1\\
      \hline
      77 & 317 & 3\\
      \hline
      78 & 322 & 7\\
      \hline
      79 & 327 & 14\\
      \hline
      80 & 332 & 43\\
      \hline
      81 & 338 & 12\\
      \hline
      82 & 345 & 1\\
      \hline
      83 & 350 & 2\\
      \hline
      84 & 355 & 4\\
      \hline
      85 & 360 & 6\\
      \hline
      86 & 365 & 9\\
      \hline
      87 & 370 & 13\\
      \hline
      88 & 375 & 16\\
      \hline
      89 & 380 & 21\\
      \hline
      90 & 385 & 23\\
      \hline
      91 & 390 & 46\\
      \hline
      92 & 396 & 1\\
      \hline
      93 & 401 & 7\\
      \hline
      94 & 406 & 80\\
      \hline
      95 & 412 & 6\\
      \hline
      96 & 418 & 1\\
      \hline
      97 & 423 & 3\\
      \hline
      98 & 429 & 1\\
      \hline
      99 & 434 & 4\\
      \hline
      100 & 439 & 22\\
      \hline

    \end{tabular}
  \end{minipage}
  \caption{V is the number of vertices, E is the highest edge count found at V, and I is the number of found isomorphism classes of UDGs with V vertices and E edges.}
  \label{edgecounts}
\end{table}

\section{Further Work and Questions}
\subsection{Artificial Intelligence Training}
One of the outcomes of this research is a database of roughly sixty million graphs up to 100 vertices. This data provides a strong base for training artificial intelligence models to generate larger and larger graphs, with the eventual goal of studying the larger and larger known sets of graphs to generate sequences of new maximal graphs which improve upon previous lower bounds.
\subsection{The Moser Ring}
A natural extension of the Moser lattice that was shown in Theorem \ref{thm:18 unit vectors} to have degree 18 as a whole is the Moser ring, which may be a strong candidate for generating maximal graphs with vertex degrees greater than 18. The Moser ring, $M_R$ is defined as an extension of the Moser lattice including multiplication. $M_R$ has the usual addition and the multiplication operation defined as
\begin{eqnarray*}
    (a\cdot 1 + b\cdot\omega_1 + c\cdot\omega_3 + d\cdot\omega_1\omega_3) (a'\cdot 1 + b'\cdot\omega_1 + c'\cdot\omega_3 + d'\cdot\omega_1\omega_3),
\end{eqnarray*}
for all $a, b, c, d, a', b', c', d' \in \Z$. $M_R$ is an extension ring $\Z[\omega_1, \omega_3]$, where $\omega_1^2 = \omega_1^2 - 1$, $6\omega_3^2 = 10 \omega_3 -6$, and $\omega_3^2 = \frac{5}{3} \omega_3 -1$. 

Same as the Moser lattice, $M_R$ has an ordered base with 4 elements 
\begin{eqnarray*}
    B = \{1, \omega_1, \omega_3, \omega_1\omega_3\},
\end{eqnarray*}
where $\omega_1 = \frac{1 + i\sqrt{3}}{2}$ and $\omega_3 = \frac{5 + i\sqrt{11}}{6}$. Similarly to the matrix form of UDGs with vertices in the Moser lattice, as given in Definition \ref{def:matrix form of udg}, every element $z\in M_R$ of the Moser ring can be written as $z=\frac{\sum_{b_i \in B} a_i b_i}{3^k}$ for some $(a_0, a_1, a_2, a_{3}, k)\in \mathbb{Z}^5$. Moreover, simplifying fractions, we get a unique such representation. Thus, for a UDG with $n$ vertices in the Moser ring, we could represent it by an $n \times 5$ integer matrix. All the beam search algorithms would be similar to the Moser lattice, except for the get-children function. We have a multiplication operator besides an addition operator for creating children of a UDG. Therefore, we potentially can get more children in the Moser ring than in the Moser lattice. 

Ideally, searching in the Moser ring could generate denser UDGs. Notably, this would not have the 18 edge degree limit of the Moser lattice. However, preliminary results indicated that transitioning to the Moser ring did not enhance the search outcomes. As a result, we search on the Moser lattice instead. Note that we have not implemented further optimizations on the Moser ring than what was done on the Moser lattice. It is possible that with additional optimizations, the beam search on the Moser ring could provide denser UDGs.

\subsection{Optimization}
Hyperparameter tuning could be used to optimize the beam width used for each size of graph, using specific graphs as benchmarks for the algorithm's progress. Further improvements could involve loss functions on the visitation values or new notions of diversity.

\section*{Acknowledgements} We would like to thank Helmut Ruhland for alternative lattice ideas and investigating lattices with many unit vectors \cite{ruhland2024families}.

This research was conducted under the auspices of the Budapest Semesters in Mathematics program's ``Research Opportunities'' initiative. D.~V.~and P.~Zs.~were supported by the Ministry of Innovation and Technology NRDI Office within the framework of the Artificial Intelligence National Laboratory (RRF-2.3.1-21-2022-00004). P.~Zs.~was partially supported by the Ministry of Innovation and Technology NRDI Office within the framework of the ELTE TKP 2021-NKTA-62 funding scheme.

\bibliographystyle{plain}
\bibliography{ref.bib}

\vspace{1 cm}
\noindent

\medskip
\noindent
{\sc Peter Engel}
\smallskip

\noindent
{\em Cornell University Department of Mathematics, 301 Tower Rd, Ithaca, NY 14853}

\smallskip

\noindent
e-mail address: \texttt{pde23@cornell.edu}

\medskip
\noindent
{\sc Owen Hammond-Lee}
\smallskip

\noindent
{\em Georgia Institute of Technology School of Mathematics, 686 Cherry St NW, Atlanta, GA 30332}

\smallskip

\noindent
e-mail address: \texttt{ohammondlee3@gatech.edu}

\medskip
\noindent
{\sc Yiheng Su}
\smallskip

\noindent
{\em University of Wisconsin–Madison Department of Computer Science, 1210 W Dayton St, Madison, WI 53706}

\smallskip

\noindent
e-mail address: \texttt{su228@wisc.edu}

\medskip
\noindent
{\sc Dániel Varga}
\smallskip

\noindent
{\em HUN-REN Alfréd Rényi Institute of Mathematics, Reáltanoda u. 13-15, 1053,  Budapest, Hungary}

\smallskip

\noindent
e-mail address: \texttt{daniel@renyi.hu}

\medskip
\noindent
{\sc Pál Zsámboki} ({\em corresponding author})
\smallskip

\noindent
{\em HUN-REN Alfréd Rényi Institute of Mathematics, Reáltanoda u. 13-15, 1053, Budapest, Hungary\\ Institute of Mathematics, Faculty of Science, Eötvös Loránd University, Pázmány Péter sétány 1/C, 1117, Budapest, Hungary}

\smallskip

\noindent
e-mail address: \texttt{zsamboki.pal@renyi.hu}

\end{document}